\title{Minimum depth of double cross product extensions}
\author[*]{Alberto Hern\'andez Alvarado}
\affil[*]{Escuela de Matem\'atica - Centro de Investiogaciones en Matem\'atica pura y Aplicada\\ Universidad de Costa Rica}
\date{} 
\begin{document}

\maketitle

\newtheorem{thm}{Theorem}[section]
\newtheorem{ex}[thm]{Example}
\newtheorem{de}[thm]{Definition}
\newtheorem{lem}[thm]{Lemma}
\newtheorem{pr}[thm]{Proposition}
\newtheorem{co}[thm]{Corollary}
\newtheorem{rem}[thm]{Remark}

\newcommand \rrhu {\mathrel{\mathpalette\lrhup\relax}}
\newcommand \lrhup[2]{\ooalign{$#1\rightharpoonup$\cr $#2 \hspace{1mm}\rightharpoonup$\cr}}

\newcommand \llhu {\mathrel{\mathpalette\rlhup\relax}}
\newcommand \rlhup[2]{\ooalign{$#1\leftharpoonup$\cr $#2 \hspace{1mm}\leftharpoonup$\cr}}

\newcommand \vtick{\textsc{\char13}}
\newcommand{\exedout}{\rule{0.8\textwidth}{0.5\textwidth}}


\begin{abstract}
In this paper we explore minimum odd and minimum even depth subalgebra pairs in the context of double cross products of finite dimensional Hopf algebras. We start by defining factorization algebras and outline how subring depth in this context relates with the module depth of the regular left module representation of the given subalgebra. Next we study minimum odd depth for double cross product Hopf subalgebras and determine their value in terms of their related module depth, we conclude that minimum odd depth of Drinfel'd double Hopf subalgebras is $3$. Finaly  we produce a necessary and sufficient condition for depth $2$ in double cross product Hopf subalgebra extensions. This sufficient condition is then used to prove results regarding minimum depth $2$ in Drinfel'd double Hopf subalgebras, particularly in the case of finite Group Hopf algebras. Lastly we provide formulas for the centralizer of a normal Hopf subalgebra in a double cross product scenario.
\end{abstract}

\textbf{Key words:} Subring depth, Hopf subalgebras, Double cross product Hopf algebras, Drinfel'd double, normality.\\

\textbf{Mathematics subject classification:} $16$S$40$, $16$E$99$, $16$T$20$\\

\textbf{Acknowledgements:} This research was funded by \textit{Escuela de Matem\'atica}  at \textit{Universidad de Costa Rica} via the project $821$-B$7$-$251$ -  \textit{CIMPA, UCR}. The author would also like to thank Yorck Sommerhauser for a fruitful conversation in Mexico City during the \textit{CLA} $2019$ regarding Section \eqref{Depth2}.

\section{Introduction and preliminaries}

The study of ring extensions and in particular finite dimensional algebra extensions has been central in the development of abstract algebra for the grater part of the last hundred years.  The concept of depth of a ring extension can be traced back to $1968$ to Hirata\vtick s work generalizing certain aspects of Morita theory \cite{Hi}. This work was followed by Sugano in \cite{Su}, and others throughout the nineteen seventies and the nineteen eighties  such as \cite{Su1}, \cite{Mor} and \cite{Ste}

 In $1972$ W. Singer introduced the idea of a matched pair of Hopf algebras in the connected case \cite{Si}, this was extended by Takeuchi \cite{Ta} in the early nineteen eighties by considering the non connected case. Both these works set the basis for the study of double cross products of Hopf algebras that was brought forward by Majid \cite{Ma1} and others, starting from the early nineteen nineties.
 
More recently in  the early two thousands the idea of depth of a ring extension was further studied in the context of Galois coring structures \cite{Ka1}, and to characterize structure properties involving self duality, Forbenius extensions  and normality such as in \cite{Ka7}, \cite{Bu1} and \cite{KK1}. Moreover,  fair amount of research regarding combinatorial aspects of finite group extensions has been done recently as well, we point out  \cite{BK1} and \cite{BDK}. Other interesting results may be found in \cite{BKK}, \cite{SD}, \cite{F} and \cite{FKR}.  The other variant of this trend that has been developing in recent years is the study of depth in the context of finite dimensional Hopf algebra extensions  \cite{BK1}, \cite{H}, \cite{HKY}, \cite{HKS}, \cite{HKL}, \cite{Ka2}, \cite{Ka8} and others. Is in the spirit of the latter that we develop the work presented here, in the context of extensions of finite dimensional Hopf algebras in double cross products of Hopf algebras.

Throughout this paper all rings $R$ and algebras $A$ are associative with unit, all algebras are  finite dimensional over a field  $k$ of characteristic zero. All modules  $M$ are finite dimensional as well.  All subring pairs $S \subseteq R$ satisfy $1_S = 1_R$ and we denote the extension as $S\hookrightarrow R$.

The paper is organized as follows: In Subsection \eqref{preliminaries} preliminaries on the concept of depth will be reviewed. Mainly definitions on subring depth, the concept of module depth in a tensor category and some results that will be of interest further into this study. Other concepts will be introduced when needed.
 
 Section \eqref{Factorisation Algebras} deals with the concept of an algebra extension that factorizes as a tensor product of subalgebras. We adapt the concept of subring depth to this scenario and prove two preliminary results on depth of an extension of an algebra in a factorization algebra in Theorems \eqref{factorization iso}, \eqref{thm45} and Corollary \eqref{augmented algebra equality}. Example \eqref{Heisenberg} reviews the case of the minimum depth of a Hopf algebra $H$ in its smash product with and $H$-module algebra $A$, in particular the case of the Heisenberg double $\mathcal{H}(H)$ of a finite dimensional Hopf algebra, which motivates the next two sections.
 
 Section \eqref{Double Cross Products} deals with the definitions of double cross products as factorization algebras  in Propositions \eqref{dcp}  and \eqref{FactorHalgebra} and explores minimum odd depth for this cases in Theorems   \eqref{mddcp} and \eqref{Drinfelddepth}.
 
 Section \eqref{Depth2} contains our main result in the form of Theorem \eqref{mainthm}. Our result establishes  a necessary and sufficient condition for minimum even depth to be less or equal to $2$ in the case of double cross product  extensions of Hopf algebras. This sufficient condition is then utilized to prove particular cases for Drinfel\vtick d double extensions in the case of finite group algebras in  Corollary \eqref{abeliangroup} and to provide formulas for the centralizer of a Hopf subalgebra in the case of a depth two double cross product extension in Proposition \eqref{centralizer} and Corollary \eqref{centralizergroup}.

\subsection{Preliminaries on Depth}
\label{preliminaries}

Let $R$ be a ring and $M$ and $N$ two left (or right) $R$-modules. We say $M$ is similar to $N$ as an $R$ module if there are positive integers $p$ and $q$ such that $M|pN$ and $N|qM$, where $nV$ means $\oplus^n V$ for every $n$ and every $R$ module $V$ and $M|pN$ means that  $M$ is a direct summand of $pN$ or equivalently that $M\otimes * \cong pN$, in this case we denote the similarity as $M \sim N$. Notice that this similarity is compatible with induction and restriction functors on $_R\mathcal{M}$, for if $R\hookrightarrow L$ is an extension of $R$ and $K$ is an  right $L$ module then $M\sim N$  as $R$ modules implies $M\otimes_R K \sim N\otimes_R K$ as right $L$ modules. Moreover, if $S\hookrightarrow R$ is a subring then $M\sim N$ as $R$ modules implies $M\sim N$ as $S$ modules.

Consider now a ring extension $B\hookrightarrow A$. Let $n \geq 1$, by  $A^{\otimes_B (n)}$ we mean $A\otimes_B A\otimes_B \cdots \otimes_B A$ $n$ times, and define $A^{\otimes_B (0)}$ to be $B$. Notice that for $n\geq 1$ $A^{\otimes_B (n)} $ has a natural $X$-$Y$-bimodule structure where $X,Y \in \{A,B\}$ and for $n=0$ we get a $B$-$B$-bimodule structure.

\begin{de}
\label{depthdefinition}

Let $B\hookrightarrow A$ be a ring extension, we say $B$ has:
\begin{enumerate}
\item \textbf{Minimum odd depth 2n +1}, denoted $d(B,A) = 2n+1$, if $A^{\otimes_B (n+1)} \sim A^{\otimes_B (n)}$ as $B$-$B$ modules for  $n\geq 0$.
\item \textbf{Minimum even depth 2n}, denoted d(B,A) = 2n,   if $A^{\otimes_B (n+1)} \sim A^{\otimes_B (n)}$ as either $B$-$A$ or $A$-$B$ modules for for $n\geq1$.
\end{enumerate}
\end{de}

Notice that by the observation made above one has that for all $n\geq 0$ $d(B,A) = 2n$ implies $d(B,A) = 2n+1$ by module restriction, and that  for all $m\geq 1$ $d(A,B) = 2m +1$ implies $d(B,A) = 2m + 2$ for all $m$ by module induction. Hence we are only interested in the minimum values for which any of these relations is satisfied. In case there is no such minimum value we say the extension has infinite depth.

A third type of subring depth called \textbf{H-depth} denoted by  $d_h(B,A) = 2n-1$ if $A^{\otimes_B (n+1)} \sim A^{\otimes_B (n)}$ as $A$-$A$ modules for $n\geq 1$ was introduced by Kadison in \cite{Ka5} as a continuation of the study of $H$ - separable extensions introduced by Hirata, where such extensions are exactly the ones satisfying $d_h(B,A) = 1$. For the purposes of this paper we will restrict our study to minimum odd and even depth only. In particular the cases $d(B,A) \leq 3$ and $d(A,B) \leq 2$.

Let $B\hookrightarrow A$ be a ring extension, $R = A^B$ the centralizer and $T = (A\otimes_B A)^B$ the $B$ central tensor square. It is shown in \cite{Ka1}[Section 5] that $d(A,B) \leq 2$ implies a Galois $A$-coring structure in $A\otimes_R T$ in the sense of \cite{BzWs}. Further more it is also shown in \cite{Ka1} that if the extension $B\hookrightarrow A$ is Hopf Galois for a given finite dimensional Hopf algebra $H$ then $d(B,A) \leq 2$.

Let $R\hookrightarrow H$ be a finite dimensional Hopf algebra extension. Define their quotient module $Q$ as $H/R^+H$ where $R^+ = ker\varepsilon \cap R$.  Suppose that $R$ is a normal Hopf subalgebra of $H$, one can easily show that  the extension $R\hookrightarrow H$ is $Q$-Galois and therefore $d(R,H) \leq 2$. The converse happens to be true as well and the details can be found in \cite{BK1}[Theorem 2.10]. Hence,  the following result  holds:

\begin{thm}
\label{normality}
Let $R\hookrightarrow H$ be a finite dimensional Hopf algebra pair. Then $R$ is a normal Hopf subalgebra of $H$ if and only if 
\[d(R,H) \leq 2\]
\end{thm}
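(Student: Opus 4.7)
The plan is to prove the two implications separately, using the ingredients already assembled in the preliminaries.

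Forward direction ($R$ normal $\Rightarrow d(R,H)\leq 2$): Assuming $R$ is a normal Hopf subalgebra of $H$, I would first verify that $R^+H = HR^+$ is a Hopf ideal, so that the quotient $Q = H/R^+H$ inherits a Hopf algebra structure and the projection $\pi:H\to Q$, $h\mapsto \overline{h}$, makes $H$ into a right $Q$-comodule algebra with coaction $h \mapsto h_{(1)}\otimes \overline{h_{(2)}}$. The crucial step is checking that $R\hookrightarrow H$ is $Q$-Galois, i.e.\ that the canonical map
\[\beta: H\otimes_R H \longrightarrow H\otimes Q,\qquad x\otimes_R y \longmapsto xy_{(1)}\otimes \overline{y_{(2)}}\]
is bijective. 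In the finite-dimensional setting this is standard: the Nichols--Zoeller theorem gives freeness of $H$ over $R$ (hence faithful flatness), and together with normality this forces $\beta$ to be an isomorphism. Once the $Q$-Galois property is in hand, I invoke the result from \cite{Ka1} (recalled in the preliminaries) that every such Hopf--Galois extension by a finite-dimensional Hopf algebra satisfies $d(R,H)\leq 2$ in the sense of Definition \eqref{depthdefinition}.

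Converse direction ($d(R,H)\leq 2 \Rightarrow R$ normal): Here I would appeal directly to \cite{BK1}[Theorem 2.10], as the author signals. The underlying idea is to use the depth-$2$ condition $H\otimes_R H \sim H$ as $R$-$H$ (or $H$-$R$) bimodules in order to manufacture a right coaction of a quotient coalgebra $Q$ on $H$ with subalgebra of coinvariants equal to $R$; combined with the Hopf algebra structure inherited from $H$, this forces $R^+H$ to be a two-sided ideal and hence $R$ to be normal in $H$.

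Main obstacle: there are no genuinely new computations to carry out, since both implications are available in the literature. The only real care needed is bookkeeping, namely to reconcile the bimodule-theoretic formulation of $d(R,H)\leq 2$ in Definition \eqref{depthdefinition} with the Galois-coring formulation of \cite{Ka1} and \cite{BK1}, and to check that the choice of sidedness ($R$-$A$ versus $A$-$R$ bimodules) in the even depth $2$ case does not affect the final conclusion. This last point rests on bijectivity of the antipode of $H$ and on the left-right symmetry of normality for Hopf subalgebras of a finite-dimensional Hopf algebra, so no separate argument is required beyond pointing it out.
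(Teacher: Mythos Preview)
Your proposal is correct and matches the paper's own argument essentially step for step: the paper likewise sketches the forward direction by observing that normality of $R$ makes $R\hookrightarrow H$ a $Q$-Galois extension (with $Q=H/R^+H$) and then invokes \cite{Ka1} to conclude $d(R,H)\leq 2$, while the converse is deferred verbatim to \cite{BK1}[Theorem~2.10]. The extra details you supply (Nichols--Zoeller freeness, the explicit Galois map, the sidedness remark) are sound elaborations of exactly the outline the paper gives.
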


Now we consider again a $k$ algebra $A$ and an $A$-module $M$. Recall that the $n$-th truncated tensor algebra of $M$ in $_A\mathcal M$ is defined as 
\[T_n(M) = \bigoplus_{i=1}^n M^{\otimes (n)}\quad \textit{and} \quad T_0(M) = k\]

We then define the \textbf{module depth} of $M$ in $_A\mathcal{M}$ as $d(M,_A\mathcal{M}) = n$ if and only if $T_n(M) \sim T_{n+1}(M)$. In case $M$ is an $A$-module coalgebra (a coalgebra in the category of $A$ modules) then $d(M,_A\mathcal{M}) = n$ if and only if $M^{\otimes (n)} \sim M^{\otimes (n+1)}$ \cite{Ka2}, \cite{HKY}.

We point out that an $A$-module $M$  has module depth $n$ if and only if it satisfies a polynomial equation $p(M) = q(M)$ in the representation ring of $A$. In this case $p$ and $q$ are polynomials of degree at most $n+1$ with integer coefficients. A brief proof of this can be found in \cite{H}. For this reason we say that a module $M$ has  finite module depth in $_A\mathcal{M}$ if and only if it is an algebraic element in the representation ring of $A$.

Finally, we would like to mention that in the case of Hopf subalgera extensions $R\hookrightarrow H$ there is a way to link subalgebra depth with module depth. The reader will find a proof of the following in \cite{Ka2}[Example 5.2]:

\begin{thm}
\label{hopfmoduledepth}
Let $R\hookrightarrow H$ a Hopf subalgebra pair. Consider their quotient module $Q$, then the minimum depth of the extension satisfies:
\[2d(Q,_R\mathcal{M}) + 1 \leq d(R,H) \leq 2d(Q,_R\mathcal{M}) + 2\]
\end{thm}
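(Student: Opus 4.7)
The strategy is to establish a tight connection between the relative tensor-power tower $H^{\otimes_R (n+1)}$ and the tensor powers of the quotient module $Q = H/R^+H$ via the natural bimodule isomorphism
\[\Psi_n: H^{\otimes_R (n+1)} \xrightarrow{\;\cong\;} H \otimes_k Q^{\otimes n},\]
obtained by iterating the Hopf map $h \otimes h' \mapsto h h'_{(1)} \otimes \overline{h'_{(2)}}$, whose inverse $h \otimes \overline{h'} \mapsto h S(h'_{(1)}) \otimes h'_{(2)}$ exploits the antipode. First I would establish this isomorphism carefully: well-definedness over $R$ follows because $R^+ \subseteq \ker \varepsilon$, and invertibility is checked via the antipode axiom. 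Crucially, $\Psi_n$ is an iso of $H$-$R$-bimodules (hence of $R$-$R$-bimodules upon restriction), with the left $R$-action on $H \otimes Q^{\otimes n}$ on the $H$-factor only and the right $R$-action diagonal via $\Delta$. I would also record that $Q$ is a coalgebra in ${}_R\mathcal{M}$ (direct check, using $\Delta(R^+) \subseteq R^+ \otimes R + R \otimes R^+$), so that its module depth is characterized by $d(Q,{}_R\mathcal{M}) = n$ iff $Q^{\otimes n} \sim Q^{\otimes (n+1)}$ in ${}_R\mathcal{M}$.

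For the upper bound $d(R,H) \leq 2 d(Q,{}_R\mathcal{M}) + 2$: if $d(Q,{}_R\mathcal{M}) = n$, then $Q^{\otimes n} \sim Q^{\otimes (n+1)}$ in ${}_R\mathcal{M}$. Tensoring with $H$ on the left preserves this similarity, yielding $H \otimes Q^{\otimes n} \sim H \otimes Q^{\otimes (n+1)}$ as $H$-$R$ bimodules (the $H$-factor carries both the left $H$-structure and its contribution to the diagonal right $R$-action, so the $Q$-tensoring is benign). Transporting through $\Psi$ produces $H^{\otimes_R(n+1)} \sim H^{\otimes_R(n+2)}$ as $H$-$R$ bimodules, which is precisely the condition $d(R,H) \leq 2(n+1) = 2d(Q,{}_R\mathcal{M}) + 2$.

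For the lower bound $d(R,H) \geq 2 d(Q,{}_R\mathcal{M}) + 1$ I would argue by contrapositive: assume $H^{\otimes_R(m+1)} \sim H^{\otimes_R m}$ as $R$-$R$-bimodules (so $d(R,H) \leq 2m+1$); transport through $\Psi$ to obtain $H \otimes Q^{\otimes m} \sim H \otimes Q^{\otimes (m-1)}$ as $R$-$R$-bimodules; then extract a similarity of $Q^{\otimes \bullet}$ in ${}_R\mathcal{M}$ alone. The main obstacle is precisely this extraction step: the naive coinvariant functor $k \otimes_R -$ kills the left $R$-structure on $Q^{\otimes \bullet}$ and only yields a similarity of vector spaces. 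To bypass this I would invoke Nichols--Zoeller freeness of $H$ as an $R$-module --- which makes $R$ a direct summand of $H$ on the appropriate side --- together with the antipode-mediated compatibility between the left $R$-action (by multiplication) and the diagonal right $R$-action on $Q$. This allows one to split off a copy of $R \otimes Q^{\otimes \bullet}$ from $H \otimes Q^{\otimes \bullet}$ and thereby recover $Q^{\otimes m} \sim Q^{\otimes(m-1)}$ in ${}_R\mathcal{M}$, giving $d(Q,{}_R\mathcal{M}) \leq m$ and hence $d(R,H) \geq 2 d(Q,{}_R\mathcal{M}) + 1$.
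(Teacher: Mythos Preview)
The paper does not prove this theorem at all; it simply cites \cite{Ka2}[Example~5.2]. Your approach via the bimodule isomorphism $H^{\otimes_R(n+1)}\cong H\otimes Q^{\otimes n}$ is precisely the argument used there, so at the strategic level you are on target.

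Your upper-bound argument is fine. For the lower bound, however, you manufacture an obstacle that is not really there and then overkill it with Nichols--Zoeller. With your isomorphism the left $R$-action on $H\otimes Q^{\otimes m}$ sits on $H$ alone, while the right $R$-action is the diagonal one on \emph{all} factors (including the $H$ slot). Hence applying $k\otimes_R(-)$ does not return a bare vector space: since $k\otimes_R H\cong Q$, one obtains
\[
k\otimes_R\bigl(H\otimes Q^{\otimes m}\bigr)\;\cong\;Q^{\otimes(m+1)}
\]
as \emph{right} $R$-modules with the diagonal action, and the $R$-$R$-bimodule similarity $H\otimes Q^{\otimes m}\sim H\otimes Q^{\otimes(m-1)}$ descends directly to $Q^{\otimes(m+1)}\sim Q^{\otimes m}$ in $\mathcal{M}_R$. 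That already gives $d(Q,\mathcal{M}_R)\leq m$, i.e.\ $d(R,H)\geq 2d(Q)+1$, with no freeness hypothesis required. (The left/right discrepancy with the statement's ${}_R\mathcal{M}$ is cosmetic: either interpret the notation loosely, or run the mirror isomorphism $H^{\otimes_R(n+1)}\cong Q^{\otimes n}\otimes H$ with $Q=H/HR^{+}$ and apply $(-)\otimes_R k$ instead; the antipode interchanges the two pictures.) Your Nichols--Zoeller detour would also reach the conclusion, but it is not the route taken in \cite{Ka2} and it imports a nontrivial theorem where an elementary functor suffices.
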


\section{Depth of factorization algebra extensions}
\label{Factorisation Algebras}

Let $A$ and $B$ be two finite dimensional algebras. Consider the following map:
\[
\psi : B\otimes A \longrightarrow A\otimes B \quad ; \quad b\otimes a \longmapsto a_{\alpha}\otimes b^{\alpha}  
\]

such that

\[
\psi (1_B\otimes a) = a\otimes 1_B,\quad \psi (b\otimes 1_A) = 1_A\otimes b
\]

for all $a \in A$ and $b\in B$. Moreover suppose $\psi$ satisfies the following commutative octagon for all $a,d \in A$, and all $b,c \in B$:

\begin {equation}
\label{factorisation associativity}
(ad_\alpha)_\beta \otimes b^\beta c^\alpha = a_\beta d_\alpha \otimes (b^\beta c)^\alpha
\end{equation}

We call $\psi$ a factorisation of $A$ and $B$ and $A\otimes_\psi B$ a factorisation algebra of $A$ and $B$, a  unital associative algebra with product

\begin{equation}
(a\otimes b) (c\otimes d) = a\psi (b\otimes c)d = ac_\alpha \otimes b^\alpha d
\end{equation}\\

where $a,c \in A$, $b,d \in B$ and the unit element is $1_A \otimes 1_B$.  Besides $A$ and $B$ are $A\otimes_\psi B$ subalgebras via the inclusions $A\hookrightarrow A\otimes_\psi 1_B$ and $B\hookrightarrow 1_A\otimes_\psi B$.

Factorisation algebras are ubiquitous: Setting $\psi(b\otimes a) = a\otimes b$ yields the tensor algebra $A\otimes B$. If $H$ is a Hopf algebra and $A$ a left $H$-module algebra satisfying $h\cdot (ab) = (h_{1}\cdot a )(h_2 \cdot b)$, $h\cdot 1_A = \varepsilon(h) 1_A$ for all $h \in H$ and $a,b \in A$, define $\psi: H\otimes A; \quad h\otimes a\longmapsto h_1\cdot a \otimes h_2$ then the product becomes $(a\otimes h)(b\otimes g) = a\psi(h\otimes b)g = a(h_1 \cdot b \otimes h_2)g = ah_1\cdot b\otimes h_2 g$. It is a routine exercise to verify that $A\otimes_\psi H$ is a factorisation algebra and that $ A\otimes_\psi H = A\# H$ is the smash product of $A$ and $H$. Double cross products of Hopf algebras are also examples of factorisation algebras, we will study them further in Section \eqref{Double Cross Products}.

Now let $A\otimes_\psi B$ be a factorisation algebra via $\psi: B\otimes A \longmapsto A\otimes B$. For the sake of brevity we will denote it $S_\psi = A\otimes_\psi B$ for the rest of this Section. We point out that due to multiplication in $S_\psi$ and the fact that both $A$ and $B$ are subalgebras of $S_\psi$ we get that for every $n\geq 1$, $S_\psi^{\otimes_B (n)} \in$ $_{S_\psi}\mathcal{M}_{S_\psi}$ in the following way:
\begin{equation*}
(a\otimes_\psi b)(a_1 \otimes b_1 \otimes_B \cdots \otimes_B a_n\otimes b_n)(c \otimes_\psi d) =
\end{equation*}  
\begin{equation*}
= a\psi(b\otimes a_{1})b_{1}\otimes_{B}\cdots \otimes_{B}a_{n}\psi(b_{n}\otimes c)d =
\end{equation*}
\begin{equation}
\label{psi linearity}
= aa_{1\alpha} \otimes b^\alpha b_1\otimes_B \cdots \otimes_B a_nc_\alpha\otimes b_n^\alpha d
\end{equation}

The same condition holds for $S_\psi$ as either left or right $B$ module via subalgebra restriction. In this case we can assume $n \geq 0$ and define $S_\psi^{\otimes_B (0)} = B$. This allows us to consider the following isomorphism:

\begin{thm}
\label{factorization iso}
Let $A$ and $B$ be algebras, $\psi: B\otimes A \longmapsto A\otimes B$ a factorisation and $S_\psi$ the corresponding factorisation algebra. Then:
\begin{equation}
S_\psi ^{\otimes_B (n)} \cong A^{\otimes(n)} \otimes B
\end{equation}
as $X$-$Y$-bimodules, with $X$,$Y$ $\in \{S_{\psi}, B \}$ for $n\geq 1$ and as $B$-$B$-bimodules for $n \geq 0$.
\end{thm}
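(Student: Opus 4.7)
The plan is to reduce the statement to an iterated application of the standard isomorphism $(A\otimes B)\otimes_B M \cong A\otimes M$, valid for any left $B$-module $M$. Since $S_\psi = A\otimes B$ as vector spaces (indeed, as right $B$-modules) by the very definition of a factorization algebra, the base cases $n=0$ and $n=1$ are immediate. For $n\geq 2$ I proceed by induction, identifying
\[
S_\psi^{\otimes_B n} \;=\; S_\psi \otimes_B S_\psi^{\otimes_B(n-1)} \;\cong\; (A\otimes B)\otimes_B \bigl(A^{\otimes(n-1)}\otimes B\bigr) \;\cong\; A^{\otimes n}\otimes B,
\]
where the last step uses the left $B$-module structure on $A^{\otimes(n-1)}\otimes B$ transported from $S_\psi^{\otimes_B(n-1)}$ by the inductive hypothesis.

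Explicitly, I would describe the isomorphism $\Phi_n$ by straightening: starting from $a_1\otimes b_1\otimes_B\cdots\otimes_B a_n\otimes b_n$, the rule $(1_A\otimes b_i)(a_{i+1}\otimes 1_B) = a_{i+1,\alpha}\otimes b_i^{\alpha}$ lets one push every $b_i$ rightward past the $A$-factor that follows it, ultimately producing an element of $A^{\otimes n}\otimes B$. The inverse is the obvious injection
\[
\Psi_n(a_1\otimes\cdots\otimes a_n\otimes b) \;=\; (a_1\otimes 1_B)\otimes_B\cdots\otimes_B(a_{n-1}\otimes 1_B)\otimes_B(a_n\otimes b).
\]
The normalization $\psi(1_B\otimes a) = a\otimes 1_B$ yields $\Phi_n\circ\Psi_n = \mathrm{id}$, while the freedom to move $B$-elements across the tensor product over $B$ gives $\Psi_n\circ\Phi_n=\mathrm{id}$.

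Next I would check compatibility with each bimodule structure. The right $B$- and right $S_\psi$-actions act only on the last tensor factor (by the multiplication formula for $S_\psi$) and therefore commute with the straightening procedure, so they transport through $\Phi_n$ immediately. For the left $S_\psi$-action it suffices to treat the subalgebra generators $A$ and $B$: the left $A$-action simply multiplies into the first tensor factor, while the left $B$-action threads a single $B$-element through every $A$-slot using $\psi$. The octagon axiom is precisely what ensures that these two partial actions assemble into a well-defined, associative left $S_\psi$-action, and under $\Phi_n$ this matches the left action on $S_\psi^{\otimes_B n}$ by construction.

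The main obstacle is the notational bookkeeping of repeated $\psi$-crossings: every time a $B$-element crosses an $A$-element a copy of $\psi$ appears, and one must argue that the result is independent of the order in which the crossings are performed. This is exactly what the octagon axiom encodes, and once a left-to-right convention is fixed, each well-definedness check over $B$ and each bimodule compatibility reduces to a single application of that axiom at each crossing.
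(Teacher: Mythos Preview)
Your proposal is correct and follows essentially the same inductive strategy as the paper: both argue by induction on $n$, reducing to the elementary tensor identity that collapses a $B\otimes_B$ factor. The only cosmetic difference is that you peel off a copy of $S_\psi$ from the \emph{left} and invoke $(A\otimes B)\otimes_B M \cong A\otimes M$ (using that $S_\psi$ is free as a right $B$-module), whereas the paper peels from the \emph{right} and writes the step as ``$B\otimes_B A \cong A$''; your formulation is arguably cleaner since the right $B$-action on $S_\psi$ is transparently free, and you also supply the explicit straightening maps and bimodule checks that the paper leaves implicit.
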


\begin{proof}

First notice that for $n = 1$, $A\otimes_{\psi} B \cong A\otimes B$ via $a\otimes_{\psi}b \longmapsto a\otimes b$, since $A\otimes_{\psi} B$ is an algebra and multiplication is well defined.

Now, for every $n > 1$, $(A\otimes_{\psi} B)^{\otimes_{B} (n)}$ $\cong (A\otimes_{\psi} B)^{\otimes_{B} (n-1)} \otimes_{B} (A\otimes_{\psi} B)$. By induction on $n$ and using that $B\otimes_{B} A \cong A$ one gets:

\begin{equation*}
(A\otimes_{\psi} B)^{\otimes_{B}{(n-1)}} \otimes_{B} A\otimes_{\psi} B \cong  A^{\otimes_{B}(n-1)} \otimes B \otimes_{B} A\otimes B 
\end{equation*} 
\begin{equation}
\cong A^{\otimes (n-1)} \otimes A\otimes B  \cong A^{\otimes (n)} \otimes B
\end{equation} 

Finally for $n = 0$ we get $S_\psi^{\otimes_B (0)} = B \cong k\otimes B \cong A^{\otimes (0)} \otimes B$ as $B$-$B$ bimodules.

\end{proof}

Recall that a Krull-Schmidt category is a generalization of categories where the Krull-Schmidt Theorem holds. They  are additive categories such that each object decomposes into a finite direct sum of indecomposable objects having local endomorphism rings, also this decompositions are unique in a categorical sense. For example categories of modules having finite composition length are Krull-Schmidt.  

Theorem \eqref{factorization iso} in the context of a Krull-Schmidt category, allows us relate subalgebra depth in a factorization algebra with module depth in the finite tensor category of finite dimensional left $B$-modules. In turn this will allow us to compute minimum odd depth values in the case of Smash Product algebras and Drinfel \vtick d Double Hopf algebras at the end of this Section as well as in Section \eqref{Double Cross Products}. The next Theorem and its Corollary  provide this connection and they mirror  \cite{Ka2}[Equation 21] and \cite{HKY}[Equation 21].

\begin{thm}
\label{thm45}
Let $A\otimes_\psi B$ be a factorisation algebra with $_{B}\mathcal{M}_B$ a Krull-Schmidt category, and $A \in$$_B\mathcal{M}$ Then the minimum odd depth of the extension satisfies:
\begin{equation}
\label{factorisation h depth}
d(B,S_\psi) \leq 2d(A, _B\mathcal{M}) + 1
\end{equation}
\end{thm}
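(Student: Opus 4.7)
The approach is to use Theorem \ref{factorization iso} to translate the bimodule similarity defining $d(B,S_\psi)$ into a module similarity in ${}_B\mathcal{M}$, where the hypothesis on $d(A,{}_B\mathcal{M})$ applies directly.

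Set $d := d(A,{}_B\mathcal{M})$. By definition, $T_d(A)\sim T_{d+1}(A)$ in ${}_B\mathcal{M}$; since $T_{d+1}(A)=T_d(A)\oplus A^{\otimes(d+1)}$, the nontrivial direction of the similarity produces a left-$B$-module direct summand relation
\[A^{\otimes(d+1)} \;\big|\; q\, T_d(A) \;=\; q\bigoplus_{i=0}^d A^{\otimes i}\]
for some $q\geq 1$. By Theorem \ref{factorization iso} we have the bimodule isomorphism $S_\psi^{\otimes_B(n)}\cong A^{\otimes(n)}\otimes B$ for every $n\geq 0$, and the assignment $V\mapsto V\otimes B$ (equipped with the bimodule structure supplied by the factorisation) is additive and hence summand-preserving. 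Applying it lifts the relation above to
\[S_\psi^{\otimes_B(d+1)}\;\big|\;q\bigoplus_{i=0}^d S_\psi^{\otimes_B(i)}\]
in ${}_B\mathcal{M}_B$.

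To compress this direct sum into a multiple of $S_\psi^{\otimes_B d}$ I would use the ascending chain property $S_\psi^{\otimes_B(i)}\,|\,S_\psi^{\otimes_B(i+1)}$ as $(B,B)$-bimodules, which follows once $B$ is a bimodule direct summand of $S_\psi$: tensoring a splitting $S_\psi\cong B\oplus X$ over $B$ with $S_\psi^{\otimes_B(i)}$ realises $S_\psi^{\otimes_B(i)}$ as a summand of $S_\psi^{\otimes_B(i+1)}$. Iterating yields $S_\psi^{\otimes_B(i)}\,|\,S_\psi^{\otimes_B d}$ for every $i\leq d$, so Krull--Schmidt gives $\bigoplus_{i=0}^d S_\psi^{\otimes_B(i)}\,|\,(d+1)\,S_\psi^{\otimes_B d}$. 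Composing, $S_\psi^{\otimes_B(d+1)}\,|\,p\,S_\psi^{\otimes_B d}$ with $p=q(d+1)$, while the opposite inclusion $S_\psi^{\otimes_B d}\,|\,S_\psi^{\otimes_B(d+1)}$ is a single step of the chain. Hence $S_\psi^{\otimes_B(d+1)}\sim S_\psi^{\otimes_B d}$ as bimodules, proving $d(B,S_\psi)\leq 2d+1$.

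The principal technical point I anticipate is the direct-summand relation $B\,|\,S_\psi$ in ${}_B\mathcal{M}_B$ that underlies the ascending chain. This is the only non-formal ingredient beyond the isomorphism of Theorem \ref{factorization iso}: in the factorisation algebra $S_\psi=A\otimes_\psi B$ it amounts to producing a $(B,B)$-bimodule retraction $S_\psi\to B$, which is automatic in the Hopf-theoretic examples that motivate the paper (smash products and double cross products), where the counit on $A$ furnishes the retraction via its compatibility with $\psi$.
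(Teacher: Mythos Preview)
Your argument is essentially the paper's: both combine Theorem~\ref{factorization iso} with an ascending chain of direct summands and the Krull--Schmidt hypothesis to pass from $T_n(A)\sim T_{n+1}(A)$ to $S_\psi^{\otimes_B(n+1)}\sim S_\psi^{\otimes_B(n)}$. The only difference is the order of operations: the paper compresses first at the $A$-level---invoking the standard face and degeneracy maps to obtain $A^{\otimes(m)}\,|\,A^{\otimes(m+1)}$ in ${}_B\mathcal{M}$, hence $A^{\otimes(n+1)}\sim A^{\otimes(n)}$---and only then tensors by $-\otimes B$, so the separate bimodule retraction $B\,|\,S_\psi$ you flag never has to be invoked.
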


\begin{proof}
Let $d(A,_B\mathcal{M}_B) = n$. Since $_B\mathcal{M}_B$ is a Krull-Schmidt category, standard face and degeneracy functors imply $A^{\otimes_B (m)} | A^{\otimes_B (m+1)}$ for $m\geq 0$. Then $T_n(A) \sim T_{n+1}(A)$ implies $A^{\otimes (n+1)} \sim A^{\otimes (n)}$. Tensoring on the right by $(- \otimes B)$ one gets $A^{\otimes (n+1)}\otimes B \sim A^{\otimes (n)}\otimes B$. By Theorem \eqref{factorization iso} this is equivalent to $(A\otimes_\psi B)^{\otimes_B (n+1)} \sim (A\otimes_\psi B)^{\otimes_B (n)}$. This by definition is $d(B,S_\psi) \leq 2n + 1$.
\end{proof}

Recall that $B$ is a bialgebra if it is both an algebra and a coalgebra such that the coalgebra morphisms are algebra maps, i.e. $B$ is a coalgebra in the category of $k$ algebras. This means that the counit  $\varepsilon : B \longrightarrow k$ is an algebra map that splits the coproduct: $(\varepsilon \otimes id)\circ \Delta = (id \otimes \varepsilon)\circ \Delta = id$.  Via the counit the ground field $k$ becomes a trivial $B$ module via $b\cdot k = \varepsilon(b)k$. Hence, a $k$ vector space $V$ becomes a right $B$-module via : $V \cong V\otimes k$.
\begin{co}
\label{augmented algebra equality}
Let $B$ be a bialgebra. Then the inequality \eqref{factorisation h depth} becomes an equality. 
\end{co}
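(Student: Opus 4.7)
The inequality in Theorem \eqref{thm45} is already one direction; to upgrade it to an equality the plan is to establish the reverse bound $d(B,S_\psi)\geq 2d(A,{}_B\mathcal{M})+1$. The new ingredient provided by the bialgebra hypothesis is the counit $\varepsilon:B\to k$, which, being an algebra map, endows the ground field $k$ with the structure of a left $B$-module. This gives a natural augmentation functor that kills the trailing tensor factor of $B$ produced by Theorem \eqref{factorization iso}.

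Concretely, I would assume the minimum odd depth condition at level $2n+1$, i.e.\ $S_\psi^{\otimes_B(n+1)}\sim S_\psi^{\otimes_B(n)}$ as $B$-$B$-bimodules. By Theorem \eqref{factorization iso} this is equivalent to
\[
A^{\otimes(n+1)}\otimes B \;\sim\; A^{\otimes(n)}\otimes B
\]
as $B$-$B$-bimodules, where the left $B$-action sits on the first $A$-factor and the right $B$-action on the final $B$-factor. Now apply the additive functor $(-)\otimes_B k:{}_B\mathcal{M}_B\to{}_B\mathcal{M}$ to this similarity. Additive functors preserve direct summands and therefore preserve the similarity relation $\sim$; combined with the canonical isomorphism $B\otimes_B k\cong k$ of left $B$-modules this yields
\[
A^{\otimes(n+1)} \;\sim\; A^{\otimes(n)}
\]
in ${}_B\mathcal{M}$.

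From this similarity of individual tensor powers one recovers a similarity of the truncated tensor algebras. Indeed, $T_n(A)\mid T_{n+1}(A)$ is the trivial direct summand inclusion, and conversely, picking $q$ with $A^{\otimes(n+1)}\mid q\cdot A^{\otimes(n)}$ and using that $A^{\otimes(n)}$ is itself a summand of $T_n(A)$, one obtains
\[
T_{n+1}(A) \;=\; T_n(A)\oplus A^{\otimes(n+1)} \;\mid\; (q+1)\,T_n(A).
\]
Hence $T_{n+1}(A)\sim T_n(A)$, i.e.\ $d(A,{}_B\mathcal{M})\leq n$, and therefore $2d(A,{}_B\mathcal{M})+1\leq 2n+1 = d(B,S_\psi)$. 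Combined with Theorem \eqref{thm45} this forces equality. The only delicate step is book-keeping of the $B$-actions under the tensor product $(A^{\otimes(n)}\otimes B)\otimes_B k$; once one checks that the right $B$-factor being contracted is the one coming from the final tensor slot and the resulting left $B$-module structure on $A^{\otimes(n)}$ is the natural one, the argument is formal.
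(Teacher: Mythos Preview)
Your argument is correct and follows essentially the same route as the paper: use the counit to make $k$ a left $B$-module, apply the functor $(-)\otimes_B k$ to the $B$-$B$-bimodule similarity $A^{\otimes(n+1)}\otimes B\sim A^{\otimes(n)}\otimes B$ coming from Theorem~\eqref{factorization iso}, and read off $d(A,{}_B\mathcal{M})\leq n$. The only difference is cosmetic: you spell out the passage from $A^{\otimes(n+1)}\sim A^{\otimes(n)}$ to $T_{n+1}(A)\sim T_n(A)$, which the paper leaves implicit.
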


\begin{proof}
Let  $B$ be a  bialgebra, since $k$ becomes a $B$-module via the counit of $B$, tensoring by $-\otimes_B k$ or $k\otimes_B -$ is a morphism of $B$ modules. Let $d(B, S_\psi) = 2n + 1$, then by definition $S_\psi^{\otimes_B (n)} \sim S_\psi^{\otimes_B (n+1)}$ as $B$-$B$ bimodules, and by the isomorphism in Theorem \eqref{factorization iso} this implies $A^{\otimes (n)} \otimes B \sim A^{\otimes (n+1)} \otimes B$, then it suffices to tensor on the right by $ (- \otimes_B k) $ on both sides of the similarity  to get $A^{\otimes n+1} \sim A^{\otimes n}$ which in turn implies $d(A,_B\mathcal{M}) \leq n$.
\end{proof}

Notice that assuming that $A \in _B\mathcal{M}$ makes sense since the factorization algebras we are considering next all depend on this fact to be well defined. On the other hand this result says nothing about $even$ depth since by no means one should expect $A$ to be a right  or left $S_\psi$-module.

\begin{ex}
\label{Heisenberg}
\cite[Theorem 6.2] {HKY} Let $H$ be a Hopf algebra and $A$ an $H$-module algebra, consider their smash product algebra $A\#H$ and the algebra extension $H \hookrightarrow A\#H$. The extension satisfies:
\[
d(H,A\#H) = d(A,_H\mathcal{M}) + 1
\]
Moreover, as a consequence of this one can show the following: Let $dim_k(H)\geq 2$ and consider $H^*$ as a $H$-module algebra via $h\rightharpoonup f$ and their smash product  $H^*\#H$, also known as their Heisenberg double, then the extension $H\hookrightarrow H^*\#H$ satisfies 
\[ d(H,H^*\#H) = 3
\]
This follows since $H$ is a factor $H^*\# H$ subalgebra and the fact that $_{H^*}H \cong H^*_{H^*}$ and that minimum depth satisfies $d(H^*,\mathcal{M}_{H^*}) = 1$.
\end{ex}

This example motivates the question of whether this result (or an equivalent one) can be attained for a more general class of extensions of Hopf algebras into factorization algebras. The next two Sections deal with this question in the context of the Drinfel\vtick d double $D(H)$ of a Hopf algebra and more generally in the case of the double cross product $A\bowtie B$ of a matched pair of Hopf algebras $A$ and $B$.

\section{Double cross products and minimum odd depth}
\label{Double Cross Products}

The study of double cross products was started in the early seventies by  W. Singer with the introduction of matched pairs of Hopf algebras satisfying certain module-comodule factorization conditions in the case of connected module categories, \cite{Si}. Later M. Takeuchi \cite{Ta} furthered the study of matched pairs in the ungraded case, in particular, he aimed at describing natural properties of braided groups. Later S. Majid \cite{Ma1} studied bicrossed products as a means to construct self dual objects in the category of Hopf algebras primarily in the case of non commutative non cocommutative cases, in some sense motivated by the possibility to construct models for  quantum gravity. We follow Majid\vtick s definition of double cross products as in \cite{Ma}.

Let $A$ and $B$ be two Hopf algebras such that $A$ is a right $B$-module coalgebra and $B$ a left $A$-module coalgebra. We say $B$ and $A$ are a matched pair \cite{Ma}[Definition 7.2.1] if there are coalgebra maps 
\[
\alpha: A\otimes B \longrightarrow A; \quad h\otimes k \longmapsto h\triangleleft k \quad \textit{and}\quad  \beta : A\otimes B \longrightarrow B; \quad h\otimes k \longmapsto h\triangleright k
\]

 such that the following compatibility conditions hold:
 
 \begin{equation}
 (hg)\triangleleft k = \sum (h\triangleleft(g_1\triangleright k_1))(g_2\triangleleft k_2) ; \quad 1_A\triangleleft k = \varepsilon_B(k) 1_A
 \end{equation}
 and 
 \begin{equation}
 h\triangleright (kl) = \sum (h_1\triangleright k_1)((h_2\triangleleft k_2)\triangleright l) ; \quad h \triangleright 1_B = \varepsilon_A(h) 1_K
 \end{equation}
 
 Define a product by 
 \begin{equation}
 (k\bowtie h)(l\bowtie g) = \sum k(h_1\triangleright l_1)\bowtie (h_2\triangleleft l_2)g
 \end{equation}
 the resulting algebra $B\bowtie A$ is called the $\mathbf{double}$ $\mathbf{crossed}$ $\mathbf{product}$ of $A$ and $B$ \cite[Theorem 7.2.2]{Ma}, and is a Hopf algebra with coproduct, counit and antipode given by 
 
 \begin{equation}
 \Delta(k\bowtie h) = k_1\bowtie h_1 \otimes k_2\bowtie h_2
 \end{equation}
 \begin{equation}
 \varepsilon (k\otimes h) = \varepsilon_K(k)\varepsilon_H(h)
 \end{equation}
 \begin{equation}
  S(k\bowtie h) = (1_K\bowtie S_H(h))(S_K(k)\bowtie k) 
 \end{equation}
 \begin{equation*}
 = S_H(h_1)\triangleright S_K(k_1) \bowtie S_H(h_2)\triangleleft S_K(k_2) 
 \end{equation*}
 respectively.

The following are well known results and are cited here for the sake of completeness, they summarize the fact that Double Cross Products of Hopf algebras are exactly the Hopf algebras that factorize as the product of two Hopf subalgebras. The reader can refer to them in \cite{Ma} and \cite{Ma1} as well as in  \cite{BCT}.

\begin{pr}
\label{dcp}
Double crossed products are factorisation algebras
\end{pr}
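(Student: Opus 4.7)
The plan is to realise the double cross product $B\bowtie A$ as a factorisation algebra in the sense of Section \ref{Factorisation Algebras}, where the symbols $A$ and $B$ from that section are played here by Majid's $B$ and $A$ respectively. Concretely, I would define the twisting map
\[
\psi : A\otimes B \longrightarrow B\otimes A, \qquad \psi(h\otimes k) = \sum (h_1\triangleright k_1)\otimes (h_2\triangleleft k_2),
\]
built from the two actions supplied by the matched pair structure, and then show that $B\otimes_\psi A$ coincides with $B\bowtie A$ as a unital associative algebra.

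Next I would verify the two unital conditions on $\psi$ by direct computation. The identity $\psi(1_A\otimes k) = k\otimes 1_A$ follows from $1_A\triangleright k_1 = k_1$ together with $1_A\triangleleft k_2 = \varepsilon_B(k_2)\, 1_A$, and dually $\psi(h\otimes 1_B) = 1_B\otimes h$ follows from $h_1\triangleright 1_B = \varepsilon_A(h_1)\, 1_B$ and $h_2\triangleleft 1_B = h_2$. Once these are in hand, the factorisation product $(k\otimes h)(l\otimes g) = k\, l_\alpha \otimes h^\alpha g$ on $B\otimes_\psi A$ unwinds under the chosen $\psi$ to $\sum k(h_1\triangleright l_1)\otimes (h_2\triangleleft l_2)g$, which is precisely Majid's multiplication on $B\bowtie A$.

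The remaining piece is the octagon condition \eqref{factorisation associativity} for this $\psi$. In the presence of the unital conditions already established, the octagon is equivalent to the associativity of the product on $B\otimes_\psi A$; since that product agrees with the double cross product multiplication, its associativity is already part of the classical matched pair theorem \cite[Theorem 7.2.2]{Ma} quoted in the preamble to the proposition. The main technical obstacle, if one were to avoid that citation, would be a lengthy but mechanical bookkeeping calculation invoking both compatibility conditions of the matched pair together with the coalgebra-map property of $\triangleright$ and $\triangleleft$. I would sidestep this by appealing to \cite[Theorem 7.2.2]{Ma} to conclude $B\bowtie A \cong B\otimes_\psi A$ as unital associative algebras, which is the content of the proposition.
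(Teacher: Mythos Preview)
Your proposal is correct and is exactly the standard argument one would supply here. The paper itself gives no proof of Proposition~\ref{dcp}: it merely records the statement as well known and refers the reader to \cite{Ma}, \cite{Ma1} and \cite{BCT}. Your sketch unpacks precisely what those references contain --- define $\psi(h\otimes k)=\sum (h_1\triangleright k_1)\otimes(h_2\triangleleft k_2)$, check the two unit conditions from the module axioms and the matched-pair identities, observe that the resulting $\psi$-twisted product agrees on the nose with Majid's multiplication, and then either verify the octagon directly or (as you do) invoke \cite[Theorem~7.2.2]{Ma} for associativity. There is nothing to add; your handling of the role reversal between the $A,B$ of Section~\ref{Factorisation Algebras} and the $A,B$ of Section~\ref{Double Cross Products} is also accurate.
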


The converse is also true:

\begin{pr}\cite[Theorem 2.7.3]{Ma}
\label{FactorHalgebra}
Suppose $H$ is a Hopf algebra and $L$ and $A$ two sub-Hopf algebras, such that $H \cong A\otimes_\psi L$ is a factorisation, then  $H$ is a double crossed product.
\end{pr}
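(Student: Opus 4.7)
The plan is to read off a matched pair structure on $(A, L)$ from the factorisation datum $\psi$ and then identify $H$ with the resulting double cross product as a Hopf algebra.

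Because $A$ and $L$ are sub-Hopf algebras of $H$, the antipode yields $H = L \cdot A$ as well as $H = A \cdot L$, so multiplication $L \otimes A \to H$ is also a linear iso and $\psi : L \otimes A \to A \otimes L$ is bijective. The inverse $\psi^{-1} : A \otimes L \to L \otimes A$ records how a product $a \cdot l$ in $H$ re-expresses itself with the $L$-factor on the left. From $\psi^{-1}$ I would define candidate matched pair actions by projecting with counits,
\[
a \triangleright l := (\mathrm{id}_L \otimes \varepsilon_A)\,\psi^{-1}(a \otimes l) \in L, \qquad a \triangleleft l := (\varepsilon_L \otimes \mathrm{id}_A)\,\psi^{-1}(a \otimes l) \in A,
\]
giving linear maps $\triangleright : A \otimes L \to L$ and $\triangleleft : A \otimes L \to A$.

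The crucial identity to establish is
\[
\psi^{-1}(a \otimes l) \;=\; \sum (a_1 \triangleright l_1) \otimes (a_2 \triangleleft l_2),
\]
which is what makes the double cross product multiplication reproduce the multiplication of $H$. I would derive this by applying $\Delta_H$ to $al \in H$, invoking that $\Delta_H$ restricts to $\Delta_A$ and $\Delta_L$ on the sub-Hopf algebras, and projecting appropriately via counits. Given this identity, the module and module-coalgebra axioms for $\triangleright$ and $\triangleleft$ follow from associativity of multiplication in $H$ together with $\Delta_H$ being an algebra map, and the two octagonal matched pair compatibility identities come from computing $(hg)k$ and $h(kl)$ inside $H$ in two orders and equating terms in $L \otimes A$.

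Finally, the map $l \bowtie a \mapsto l \cdot a$ is an algebra iso by construction of the matched pair multiplication, a coalgebra iso because the sub-Hopf algebra inclusions $L, A \hookrightarrow H$ guarantee $\Delta_H|_L = \Delta_L$ and $\Delta_H|_A = \Delta_A$, and hence a Hopf algebra iso $L \bowtie A \cong H$. The main obstacle is the coalgebra-theoretic identity $\psi^{-1}(a \otimes l) = \sum (a_1 \triangleright l_1) \otimes (a_2 \triangleleft l_2)$: before it is in hand, the counit projections defining $\triangleright$ and $\triangleleft$ only see the ``first-order'' data of $\psi^{-1}$, and one must genuinely use that $\Delta_H$ is an algebra morphism, together with careful Sweedler bookkeeping, to recover the full commutation tensor from the actions.
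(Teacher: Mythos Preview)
Your proposal is correct and is essentially the same construction as the paper's: both define the matched-pair actions by post-composing the factorisation commutation map with the two counits, and then identify $H$ with the resulting double cross product. The only cosmetic difference is that the paper works directly with $\mu=\psi:L\otimes A\to A\otimes L$ (obtaining $H\cong A\bowtie L$), whereas you first invert $\psi$ via the antipode argument and read off the actions from $\psi^{-1}$ (obtaining the symmetric presentation $H\cong L\bowtie A$); your additional outline of the verifications is precisely what underlies the Majid result the paper cites.
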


\begin{proof}
The multiplication $m: L \otimes A \longrightarrow H$ defined by $a\otimes l \longmapsto al$ is a bijection. This implies $\displaystyle{A\bigcap L = k}$. Then consider the map:
\begin{equation*}
\mu: L\otimes A \longrightarrow A\otimes L; \quad l\otimes a \longmapsto m^{-1}(la)
\end{equation*}
then define 
\begin{equation*}
\triangleright: L\otimes A\longrightarrow A; \quad l\triangleright a = ((\varepsilon_L\otimes Id)\circ \mu)(l\otimes a)
\end{equation*}
\begin{equation*}
\triangleleft: L\otimes A\longrightarrow L; \quad l\triangleleft a = ((Id\otimes \varepsilon_A)\circ \mu)(l\otimes a)
\end{equation*}
\end{proof}

We wrote the proof of this last Proposition since it allows us to construct examples such as Example \eqref{tensorexample}. \\

Now, let $H$ be any Hopf algebra with bijective antipode $S$ with composition inverse $\overline{S}$. Let $S^*$ be the bijective antipode of $H^*$ and $\overline{S^*}$ its composition inverse, then $H$ is a right $H^{*cop}$-module coalgebra via  
\[
h \llhu f = \sum \overline{S^*}(f_2) \rightharpoonup h\leftharpoonup f_1
\]
 and $H^*$ is a left  $H$-module coalgebra via 
 \[
 h \rrhu f = \sum h_1\rightharpoonup f \leftharpoonup \overline{S}(h_2)
 \]
 see \cite{Mo}[Chapter 10] for details on this actions. Define the Drinfel\vtick d double of $H$, $D(H)$ as the double cross product $H^{*cop}\bowtie H$ with product 
 \[
 (f\bowtie h)(g\bowtie k) = \sum f(h_1\rrhu g_2)\bowtie(h_2\llhu g_1)k
 \]
  The coproduct, counit and antipode are given by 
  \[
  \Delta(f\bowtie h) = \sum (f_2\bowtie h_1)\otimes (f_1\bowtie h_2)
  \]
  \[
  \varepsilon_{D(H)}(f\bowtie h) = \varepsilon_{H^*}(f)\varepsilon_H (h)
  \]
   and 
   \[
   S_{D(H)} (f\bowtie h) = \sum (S(h_2) \rightharpoonup S(f_1)) \bowtie (f_2 \leftharpoonup S(h_1))
   \]
   respectively.

Since double crossed products of Hopf algebras are both factorization algebras and Hopf algebras  Corollary \eqref{augmented algebra equality} becomes:

\begin{pr} 
\label{mddcp}
Let $H$ and $K$ be a matched pair of Hopf algebras and consider their double crossed product $H\bowtie K$, then the Hopf algebra extension $H \hookrightarrow H\bowtie K$ satisfies 
\[
d(H,H\bowtie K) = 2d(K, _H\mathcal{M}) + 1
\] 
\end{pr}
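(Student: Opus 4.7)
The plan is simply to read the claim off Corollary \eqref{augmented algebra equality}, once the double crossed product has been presented as a factorisation algebra in the sense of Section \eqref{Factorisation Algebras}.

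First, by Proposition \eqref{dcp}, the double crossed product $H\bowtie K$ is a factorisation algebra: the product $(k\bowtie h)(l\bowtie g)=k(h_1\triangleright l_1)\bowtie (h_2\triangleleft l_2)g$ is exactly of the form $(a\otimes b)(c\otimes d)=a\psi(b\otimes c)d$, with the twist map $\psi$ assembled from the matched-pair actions $\triangleright$ and $\triangleleft$. Under this identification $H$ plays the role of the bialgebra factor in the factorisation algebra, and $K$ plays the role of the other factor. The matched-pair data simultaneously equips $K$ with a left $H$-module structure, so that $K\in {}_H\mathcal{M}$ in the sense required by Theorem \eqref{thm45} and Corollary \eqref{augmented algebra equality}.

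Second, since $H$ is a Hopf algebra it is in particular a bialgebra, so the counit $\varepsilon_H$ splits the coproduct and the key hypothesis of Corollary \eqref{augmented algebra equality} is satisfied. Applying that corollary verbatim to the extension $H\hookrightarrow H\bowtie K$ (with the role of $B$ played by $H$ and the role of $A$ played by $K$) gives the equality
\[
d(H,H\bowtie K)=2d(K,{}_H\mathcal{M})+1,
\]
which is the statement.

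There is no substantive obstacle here. The only thing to verify carefully is the bookkeeping, namely that the left $H$-action on $K$ read off from $\psi$ via $(\mathrm{id}\otimes \varepsilon_H)\circ \psi$ agrees with the action used to interpret $K$ as an object of ${}_H\mathcal{M}$. This verification is a short computation using that $\triangleright$ and $\triangleleft$ are coalgebra maps, and it is exactly analogous to the identification carried out for the smash product in Example \eqref{Heisenberg}. Once this identification is made, the conclusion follows with no further work.
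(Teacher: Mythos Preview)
Your proposal is correct and follows exactly the paper's approach: the paper's proof consists solely of the preceding sentence ``Since double crossed products of Hopf algebras are both factorization algebras and Hopf algebras Corollary \eqref{augmented algebra equality} becomes:'', and you have simply unpacked that sentence. Your additional remark about checking that the $H$-module structure on $K$ read off from $\psi$ agrees with the matched-pair action is a point the paper leaves implicit, so your write-up is in fact slightly more careful than the original.
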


\begin{ex}
\label{tensorexample}
Recall that two Hopf algebras $A$ and $B$ are said to be paired \cite{Ma1}[1.4.3] if there is a bilinear map 
\[A\otimes B \longrightarrow k; a\otimes b\longmapsto \langle a,b\rangle\]
Satisfying $\langle ac,b\rangle = \langle a\otimes c,\Delta b\rangle$, $\langle a,1\rangle = \varepsilon(a)$, $\langle 1,b\rangle = \varepsilon(b)$ and $\langle Sa,b \rangle = \langle a,Sb\rangle$. We say also say it is nondegenerate if and only if $\langle a,b \rangle = 0 $ for all $b\in B$ implies $a=0$ and $\langle a,b\rangle = 0$ for all $a \in A$ implies $b =0$. Assume now that $A$ and $B$ are paired and that $\langle , \rangle$ is convolution invertible, define 
\[a\triangleleft b = \sum a_2\langle a_1,b_1\rangle^{-1}\langle_3,b_2\rangle \]
\[a\triangleright b = \sum b_2\langle a_,b_1\rangle^{-1}\langle a_2,b_3\rangle\]
With this action we can endow $A^{op}\bowtie B$ with a double cross product structure.
Consider then $H$ to be a finite dimensional Hopf algebra and 
\[\langle , \rangle  : H\otimes H \longrightarrow k; h\otimes g \longmapsto \varepsilon(a)\varepsilon(b)\]
then $\langle , \rangle $ satisfies the conditions above, is nodegenerate if and only if $H$ is semisimple via Maschke \vtick s theorem and is convolution invertible via $\langle , \rangle  \langle , \rangle = \varepsilon $ Then $H^{op} \bowtie H$ is a double cross product  isomorphic to the tensor Hopf algebra $H^{op} \otimes H$, Proposition \eqref{FactorHalgebra},  and the minimum odd depth satisfies
\[d(H,H^{op}\bowtie H) = 3\]
Since  $d(H,_{H^{op}}\mathcal{M}) = 1$.
\end{ex}

\begin{pr}
\label{Drinfelddepth}
Let $H$ be a finite dimensional Hopf algebra  of dimension $m \geq 2$ and consider $D(H) = H^{*cop} \bowtie H$ its Drinfel\vtick d double. Then  the minimum odd depth satisfies:
\[ d(H,D(H)) = 3 \]
\end{pr}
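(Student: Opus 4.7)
The plan is to invoke Proposition \ref{mddcp} and reduce the claim to a module-depth computation. Since $D(H) = H^{*cop}\bowtie H$ is the double cross product of the matched pair $(H^{*cop},H)$, with $H^{*cop}$ a left $H$-module coalgebra via the coadjoint action $h\rrhu f = \sum h_1\rightharpoonup f \leftharpoonup \overline{S}(h_2)$, Proposition \ref{mddcp} yields
\[ d(H,D(H)) \;=\; 2\,d(H^{*cop},{}_H\mathcal{M}) + 1. \]
Hence the claim $d(H,D(H)) = 3$ reduces to verifying that $d(H^{*cop},{}_H\mathcal{M}) = 1$.

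For the upper bound $d(H^{*cop},{}_H\mathcal{M})\leq 1$, I would establish the similarity $H^{*cop}\sim(H^{*cop})^{\otimes 2}$ as left $H$-modules. The direction $H^{*cop}\,|\,(H^{*cop})^{\otimes 2}$ follows from the fact that the counit is $H$-invariant under the coadjoint action, since $h\rrhu \varepsilon = \varepsilon(h_1)\varepsilon\,\varepsilon(\overline{S}(h_2)) = \varepsilon(h)\varepsilon$. Thus $k\varepsilon$ is a direct summand of $H^{*cop}$, and the map $f\mapsto f\otimes\varepsilon$ exhibits $H^{*cop}\cong H^{*cop}\otimes k\varepsilon$ as a direct summand of $H^{*cop}\otimes H^{*cop}$. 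For the reverse direction $(H^{*cop})^{\otimes 2}\,|\,p\cdot H^{*cop}$, the plan is to exploit the $H$-module algebra structure on $H^{*cop}$ arising from the coadjoint action: the multiplication $\mu: H^{*cop}\otimes H^{*cop}\to H^{*cop}$ is an $H$-equivariant surjection, and one verifies that the isotypic support of $H^{*cop}$ in $\mathrm{Rep}(H)$ is closed under tensor products, so that every indecomposable summand of $(H^{*cop})^{\otimes 2}$ already appears in $H^{*cop}$.

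The lower bound $d(H^{*cop},{}_H\mathcal{M})\geq 1$ rules out $d=0$, which would force $H^{*cop}\sim k$ and hence trivial coadjoint action; under the hypothesis $\dim H \geq 2$ together with the implicit non-cocommutativity making $D(H)$ genuinely twisted, this is excluded. The main obstacle in this plan is the tensor-closure claim that drives the ``hard'' direction of the upper bound: for semisimple $H$ with character identity $\chi_{H^{*cop}} = \sum_V \chi_V\overline{\chi_V}$ one has $H^{*cop}\cong\bigoplus_V V\otimes V^*$ as $H$-modules, and the closure of the corresponding isotypic support under tensor products is a structural feature analogous to the fact that, for a finite group $G$ with a faithful irreducible representation, the irreducible support of the adjoint representation equals $\operatorname{Irr}(G/Z(G))$, which is manifestly tensor-closed.
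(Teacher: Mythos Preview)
Your reduction via Proposition~\ref{mddcp} to the claim $d(H^{*\mathrm{cop}},{}_H\mathcal{M})=1$ is exactly the paper's route: its one-line proof ``analogous to Example~\ref{Heisenberg}'' means one argues, just as for the Heisenberg double, that the relevant left $H$-module is isomorphic to the regular module ${}_HH$, so that $M^{\otimes 2}\cong M^{\dim H}\sim M$ and the module depth is~$1$ as soon as $\dim H\ge 2$.

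Where you diverge from the paper is in trying to compute the module depth by analysing the coadjoint representation directly, and this attempt has real gaps. First, under $\rrhu$ the object $H^{*\mathrm{cop}}$ is an $H$-module \emph{coalgebra}, not an $H$-module algebra; the multiplication $\mu\colon H^{*\mathrm{cop}}\otimes H^{*\mathrm{cop}}\to H^{*\mathrm{cop}}$ is therefore not $H$-equivariant in general, which removes the basis for your ``hard direction'' $(H^{*\mathrm{cop}})^{\otimes 2}\,\big|\,p\cdot H^{*\mathrm{cop}}$. Second, the tensor-closure assertion you yourself flag as ``the main obstacle'' is left entirely unproved, and the finite-group heuristic you offer does not establish it. Third, and most seriously, your lower bound rests on an ``implicit non-cocommutativity'' that is nowhere among the hypotheses: the statement is asserted for \emph{every} finite-dimensional $H$ with $\dim H\ge 2$. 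For cocommutative $H$ (say $H=kG$ with $G$ abelian) the coadjoint action $\rrhu$ is trivial, so $H^{*\mathrm{cop}}\cong k^{\dim H}\sim k$ has module depth~$0$ under this action, directly contradicting the value~$1$ your argument is meant to produce. The paper's approach bypasses all of this by transporting the regular-module identification from Example~\ref{Heisenberg} rather than working through the coadjoint representation.
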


\begin{proof}
The proof is analogous to Example \eqref{Heisenberg}. 
\end{proof}

This result should not come as a surprise: Whenever $H$ is cocommutative it is easy to show that its Drinfel\vtick d double and its Heisenberg double are isomorphic as algebras and since given two isomorphic algebras $A$ and $B$ and an $A$-module $M$, module depth satisfies $d(M,_A\mathcal{M}) = d(M,_B\mathcal{M})$, is immediate that for cocommutative $H$, minimum odd depth is given by  $d(H,D(H)) = d(H, H^*\#H) = 3$. But it is straightforward that depth does not depend on the cocommutativity of the coalgebra structure on $H$.




\section{Depth two}
\label{Depth2}

Consider a finite group algebra $kG$ and its dual  $(kG)^* = k\langle p_x | x \in G\rangle$  where the $\{p_x\}$ form the dual basis of $G$ satisfying $p_x(y) = \delta_{x,y}$ for all $x,y \in G$. This is an algebra via convolution product and the identity element is $\varepsilon = \sum_{y\in G} p_y$. $(kG)^*$ has a Hopf algebra structure given by 
\[ \Delta^* p_x = \sum_{lk=x} p_l \otimes p_k \]
\[\varepsilon^*(p_x) = \delta_{x,1}\]

And antipode $S^*$.

Consider then $R = kG$ a finite group algebra and $H = D(kG) = (kG)^{*cop} \bowtie kG$ its Drinfel\vtick d double. Multiplication is given by
\[(p_x\bowtie g)(p_y\bowtie k) = p_xp_{gyg^{-1}} \bowtie gk\] 
and the antipode is 
\[S(p_x\bowtie g) = (\varepsilon \bowtie g^{-1})(S^*p_x \bowtie e) = S^*p_{g^{-1}xg}\bowtie g^{-1}\]

Let now $p_x = p_x \bowtie e \in (kG)^*$, and $p_y\bowtie g \in H$. The right adjoint action of $H$ on $(kG)^*$ is given by 
\[S(p_y \bowtie g)_1(p_x\bowtie e)(p_y\bowtie g)_2 = \sum_{lk = y}S(p_l \bowtie g)(p_x\bowtie e)(p_k\bowtie g)\]
a quick calculation and using the formulas above shows that the latter equals 
\[\sum_{lk=y}S^*(p_{g^{-1}lg})p_{g_{-1}xg}p_{g^{-1}kg} \bowtie e\] 
A similar calculation shows that the left adjoint action of $H$ on $(kG)^*$ yields 
\[(p_y\bowtie g)_1 (p_x \bowtie e)S(p_y \bowtie g)_2 =   \sum_{lk=y} p_lp_{gxg^{-1}}p_k \bowtie e\]
and hence $(kg)^*$ is $H$ left and right ad stable and hence normal. As it is shown in Theorem \eqref{normality} this implies then that 
\[d((kG)^*, H) \leq 2\]

We point out that this is true since the left coadjoint action of $(kG)^*$ on $kG$  given by $\llhu$ is trivial on the generators:
\[g\llhu p_x = g\]
The following theorem tells us that this is in fact a necessary and sufficient condition for depth $2$ in the more general case of double cross products:
 
\begin{thm}
\label{mainthm}
Let $A,B$ be a matched pair of Hopf algebras and let $H = A\bowtie B$ be their double cross product. Then $d(A,H) \leq 2$ (Equivalently $d(B,H)\leq 2$)if and only if $B\triangleleft A$ (Equivalently $B\triangleright A$) is trivial.
\end{thm}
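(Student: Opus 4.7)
The plan is to invoke Theorem \eqref{normality} to translate the depth condition into normality of the Hopf subalgebra in question, and then to detect this normality via a canonical Hopf algebra projection from $H$ onto one of its two factors.

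I would consider the linear map $\pi \colon H \to A$ defined on the double cross product basis by $\pi(k \bowtie h) = \varepsilon(k) h$ (with $k \in B$, $h \in A$, using the product formula of Proposition \eqref{dcp}). A direct check shows $\pi$ is always a coalgebra map, a left inverse to the inclusion $A \hookrightarrow H$, and has kernel $B^+ H$. The central calculation is to determine when $\pi$ is multiplicative. Expanding $\pi$ on a product of two elements using the double cross product multiplication and simplifying via the counit axioms yields $\pi((k \bowtie h)(l \bowtie g)) = \varepsilon(k)(h \triangleleft l) g$, while $\pi(k \bowtie h)\pi(l \bowtie g) = \varepsilon(k)\varepsilon(l) h g$. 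Comparing these, $\pi$ is an algebra map if and only if $h \triangleleft l = \varepsilon(l) h$, i.e., if and only if the action $\triangleleft$ is trivial.

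If $\triangleleft$ is trivial, $\pi$ is a surjective Hopf algebra homomorphism whose kernel $B^+ H$ is a Hopf ideal, so $B$ is normal in $H$ and $d(B, H) \le 2$ by Theorem \eqref{normality}. Conversely, if $B$ is normal then $B^+ H = H B^+$ is a Hopf ideal and $H / B^+ H$ is a Hopf algebra. Using the vector-space decomposition $H \cong B \otimes A$ afforded by the double cross product structure, one finds $\dim(H / B^+ H) = \dim A$ and $A \cap B^+ H = 0$, so the composition $A \hookrightarrow H \twoheadrightarrow H / B^+ H$ is an injective Hopf algebra map that is an isomorphism by dimension. Under the identification $A \cong H / B^+ H$ the quotient map coincides with $\pi$, so $\pi$ must be multiplicative, forcing $\triangleleft$ to be trivial. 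The parallel statement, that $d(A, H) \le 2$ is equivalent to triviality of $\triangleright$, follows symmetrically using the analogous projection $H \to B$ given by $k \bowtie h \mapsto k \varepsilon(h)$.

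The main technical obstacle is establishing the Hopf algebra identification $H / B^+ H \cong A$ when $B$ is normal: beyond the linear dimension count, one must verify that the quotient's Hopf structure matches that of $A$, which rests on the fact that both the inclusion $A \hookrightarrow H$ and the quotient $H \twoheadrightarrow H/B^+H$ are Hopf algebra morphisms. The multiplicative check for $\pi$ is otherwise a straightforward Sweedler bookkeeping exercise with the counit axioms of the matched pair actions.
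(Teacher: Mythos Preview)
Your argument is correct and takes a genuinely different route from the paper's own proof. The paper works directly with the adjoint action: it expands $S(h_1\bowtie g_1)(a\bowtie 1_B)(h_2\bowtie g_2)$ in full Sweedler notation, isolates the $B$-component, and argues that this component collapses to a scalar multiple of $1_B$ for all $h,g,a$ precisely when the action $\triangleleft$ is trivial. Your approach instead encodes normality via the canonical projection $\pi$ and the Hopf ideal $B^+H$: you reduce the question to whether $\pi$ is multiplicative, which is a single clean identity rather than a long adjoint-action expansion. What your method buys is conceptual clarity---the obstruction to normality is visibly the failure of $\pi$ to be an algebra map, and that failure is literally the nontriviality of $\triangleleft$---at the cost of invoking the equivalence between normality and $B^+H=HB^+$ (or $B^+H$ being a Hopf ideal), which you use as a black box. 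The paper's direct computation is more self-contained in that regard but considerably heavier in Sweedler bookkeeping.

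Two points deserve to be made explicit in a full write-up. First, the step ``$\ker\pi=B^+H$ is a Hopf ideal, so $B$ is normal'' uses that in finite dimensions with bijective antipode one has $S(B^+H)=HB^+$, hence $B^+H$ being $S$-stable forces $HB^+=B^+H$; this is standard but worth a sentence. Second, in the converse you should note that the composite $A\hookrightarrow H\twoheadrightarrow H/B^+H$ is a Hopf algebra map because each factor is, and that its inverse composed with the quotient is exactly $\pi$ on the nose (since $k\bowtie h \equiv \varepsilon(k)(1\bowtie h) \bmod B^+H$); you gesture at this but it is the crux of why multiplicativity of $\pi$ is forced.
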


\begin{proof}
Let $A\bowtie B$ be a double cross product of Hopf algebras. Recall that an extension of finite dimensional Hopf algebras has depth $ \leq 2$ if and only if the extension is normal. Let $a\bowtie 1_B \in A$ and $h\bowtie g \in A\bowtie B$. Consider the right adjoint action of $A\bowtie B$ on $A$:
\[S(h_1\bowtie g_1)(a\bowtie 1_B)(h_2\bowtie g_2) = (Sg_1\triangleright Sh_1\bowtie Sg_1 \triangleleft Sh_1)(a\bowtie 1_B)(h_2\bowtie g_2)\]
\[ = ((Sg_1\triangleright Sh_1)((Sg_2\triangleleft Sh_2)\triangleright a_1))(((Sg_3\triangleleft Sh_3)\triangleleft a_2)\triangleright h_4) \]
\[\bowtie (((Sg_4\triangleleft Sh_5)\triangleleft a_3)\triangleleft h_6)g_5\]
$\in A$ if and only if 
\[ (((Sg_4\triangleleft Sh_5)\triangleleft a_3)\triangleleft h_6)g_5 = \lambda 1_B \]
for some $\lambda \in k$. 

Suppose  that $B\triangleleft A$ is trivial, then 
\[(((Sg_4\triangleleft Sh_5)\triangleleft a_3)\triangleleft h_6)g_5 = Sg_4\varepsilon(Sh_5)\varepsilon(a_3)\varepsilon(h_5) g_5\]
\[ = \varepsilon(g_4)\varepsilon(h_5)\varepsilon(a_3)1_B\]
Take $\lambda = \varepsilon(g_4)\varepsilon(h_5)\varepsilon(a_3)$.

Now assume that 
\[(((Sg_4\triangleleft Sh_5)\triangleleft a_3)\triangleleft h_6)g_5 = \lambda 1_B \]
for some $\lambda \in k$. Without loss of generality we can assume $h = 1_A$ so we obtain 
\[(((Sg_4\triangleleft Sh_5)\triangleleft a_3)\triangleleft h_6)g_5 = (Sg_3\triangleleft a_3)g_4 = \lambda 1_B\]
apply $\varepsilon$ on both sides of the equation to obtain 
\[\varepsilon(g_3)\varepsilon(a_3) = \lambda\]
Now let $g \in B$ and $a\in A$ since the antipode is bijective let $g = Sh$, then 
\[g\triangleleft a = Sh\triangleleft a = (Sh_1\triangleleft a)h_2Sh_3 = \varepsilon(Sh_1)\varepsilon(a)Sh_2 = Sh\varepsilon(a) = g\varepsilon(a)\]

Then $A$ is $A\bowtie B$ right ad-stable if and only if $B\triangleleft A$ is trivial. 

Consider now the left adjoint action of $A\bowtie B$ on $A$.  Then 
\[(h_1\bowtie g_1)(a\bowtie 1_b)S(h_2\bowtie g_2) \in A\]
if and only if 
\[[(g_3\triangleleft a_3)\triangleleft(Sg_5\triangleright Sh_3)](Sg_6\triangleleft Sh_4) = \lambda 1_B\]
for some $\lambda \in k$. The rest of the proof mirrors what was done above and then $A$ is left $A\bowtie B$ ad stable if and only if $B\triangleleft A$ is trivial, hence the extension is normal if and only if $B\triangleleft A$ is trivial  and $d(A,A\bowtie B) \leq 2 $ if and only if $B\triangleleft A$ is trivial. The case of the extension $B\hookrightarrow A\bowtie B$ is symmetric.
\end{proof}

\begin{co} 
\label{abeliangroup}
Let $G$ be a finite group and consider $D(kG)$, then 
\[d(kG, D(kG)) \leq 2\]
if and only if $G$ is abelian.
\end{co}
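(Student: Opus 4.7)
The plan is to apply Theorem \ref{mainthm} to the matched pair structure $D(kG) = (kG)^{*cop} \bowtie kG$, so that $d(kG, D(kG)) \leq 2$ is equivalent to the triviality of one of the two matched pair actions. Writing $A = (kG)^{*cop}$ and $B = kG$, the commentary preceding Theorem \ref{mainthm} already verifies that the action $\llhu$ of $A$ on $B$ is automatically trivial on generators of $G$, accounting for the always-valid bound $d((kG)^*, D(kG)) \leq 2$. What remains to analyze is whether the other action $\rrhu \colon kG \otimes (kG)^{*cop} \to (kG)^{*cop}$ is likewise trivial.

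I would compute $\rrhu$ on the dual basis $\{p_x\}_{x \in G}$ using the formula $h \rrhu f = \sum h_1 \rightharpoonup f \leftharpoonup \overline{S}(h_2)$ recalled in Section \ref{Double Cross Products}. For a group element $h = g$, cocommutativity of $kG$ forces $h_1 = h_2 = g$, and combining the standard hit actions $(g \rightharpoonup p_x)(k) = p_x(kg)$ and $(p_x \leftharpoonup g^{-1})(k) = p_x(g^{-1}k)$ produces
\[ g \rrhu p_x = p_{gxg^{-1}} \]
for every $g, x \in G$.

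Triviality of this action amounts to $g \rrhu p_x = \varepsilon_{kG}(g)\, p_x = p_x$ for all $g, x$, which in turn is equivalent to $gxg^{-1} = x$ for every pair $g, x \in G$, i.e., to $G$ being abelian. Both directions of the claimed equivalence then follow from Theorem \ref{mainthm}.

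I do not foresee any substantive obstacle: the argument is essentially a single reduction via Theorem \ref{mainthm} plus a bookkeeping computation of the action on the dual basis $\{p_x\}$. The only point requiring care is pairing each depth condition with the correct matched pair action, and this is settled by the parallel observation that $\llhu$ is identically trivial for Drinfel\vtick d doubles of group algebras.
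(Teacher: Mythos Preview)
Your proposal is correct and essentially identical to the paper's proof: both apply Theorem~\ref{mainthm} and identify the relevant action as $g \rrhu p_x = p_{gxg^{-1}}$, whose triviality for all $g,x\in G$ is equivalent to $G$ being abelian. The only difference is that you derive this formula from the general definition $h \rrhu f = \sum h_1 \rightharpoonup f \leftharpoonup \overline{S}(h_2)$, whereas the paper simply recalls it.
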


\begin{proof}
Let $g,x \in G$. Recall that the left coadjoint action of $kG$ on $(kG)^*$ is given by $g\rrhu p_x = p_{gxg^{-1}}$ which is trivial (i.e $ p_{gxg^{-1}} = p_x$ for all $g,x \in G$) if and only if $G$ is abelian. 
\end{proof}

\begin{ex}
Consider $H^{op}\bowtie H$ as in Example \eqref{tensorexample}, then the minimum depth satisfies 
\[d(H,H^{op}\bowtie H) \leq 2\]
since $h \triangleright g = \sum g_2\langle h_1,g_1\rangle^{-1}\langle h_2,g_3\rangle  = g_2\varepsilon(h_1)\varepsilon(g_1)\varepsilon(h_2)\varepsilon(g_3) = g\varepsilon(h)$ for all $h,g \in H$  and hence $H\triangleright H^{op}$ is trivial.
\end{ex}

Now consider the double cross product $H = A\bowtie B$, $Z(A)$, $C_H(A)$ and $N_H(B)$ the center of $A$, the centralizer of $A$ in $H$ and the normal core of $B$ in $H$ respectively. Then $C_H(A)$ satisfies the following:

\begin{pr}
\label{centralizer}
Let $H = A\bowtie B$ be a double cross product such that $d(A,H) \leq 2$. Then 
\[C_H(A) = Z(A)\bowtie N_H(B)\]
as algebras
\end{pr}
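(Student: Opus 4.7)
Since $d(A,H)\leq 2$, Theorem \eqref{mainthm} gives that $B\triangleleft A$ is trivial, so the product in $H=A\bowtie B$ reduces to $(a\bowtie b)(c\bowtie d)=a(b_1\triangleright c)\bowtie b_2 d$. By Theorem \eqref{normality}, $A$ is normal in $H$, so $C_H(A)=H^A$, the subspace of $A$-ad-invariants, and in particular is itself a Hopf subalgebra of $H$.

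For the inclusion $Z(A)\bowtie N_H(B)\subseteq C_H(A)$, note that $Z(A)\subseteq C_H(A)$ trivially, so by closure of $C_H(A)$ under multiplication it suffices to show $1\bowtie N_H(B)\subseteq C_H(A)$. Given $n\in N_H(B)$, normality of $N_H(B)$ in $H$ forces $\mathrm{ad}_l(a)(1\bowtie n)=a_1(n_1\triangleright Sa_2)\bowtie n_2\in 1\bowtie N_H(B)\subseteq 1\bowtie B$ for every $a\in A$. Applying $\varepsilon_A\otimes \mathrm{id}_B$ to this identity in $A\otimes B$ and using the module coalgebra axiom $\varepsilon(b\triangleright c)=\varepsilon(b)\varepsilon(c)$, the $B$-component collapses to $\varepsilon(a) n$, whence $\mathrm{ad}_l(a)(1\bowtie n)=\varepsilon(a)(1\bowtie n)$ and $n$ centralizes $A$.

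For the reverse inclusion I would exploit the cleft structure: the Hopf algebra projection $\pi\colon H\to B$, $a\bowtie b\mapsto \varepsilon(a)b$, yields a right $B$-coaction $\rho=(\mathrm{id}\otimes\pi)\Delta_H$ on $H$ with coinvariants $H^{coB}=A$, and since $\pi\circ\mathrm{ad}_l(a)=\varepsilon(a)\pi$, $\rho$ restricts to $\rho\colon H^A\to H^A\otimes B$ with $(H^A)^{coB}=H^A\cap A=Z(A)$. The step-one computation moreover shows that the $A$-ad-action on $A\otimes N_H(B)\subseteq H$ decomposes as $\mathrm{ad}^A_l\otimes \mathrm{id}$, so $(A\otimes N_H(B))^A=Z(A)\otimes N_H(B)$. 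Combined with the equality $C_H(A)\cap B=N_H(B)$, which follows from step one together with the observation that $C_H(A)\cap B$ is a Hopf subalgebra of $B$ that is $H$-ad-stable, one concludes $H^A=Z(A)\otimes N_H(B)=Z(A)\bowtie N_H(B)$.

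The main obstacle is the reverse inclusion, specifically showing that the $B$-component of any $A$-central element in the bowtie decomposition lies in $N_H(B)$. The cleanest route combines the right $B$-comodule structure of $H^A$ produced by $\rho$ with the normality of $H^A$ as a Hopf subalgebra of $H$, so that the intersections of $H^A$ with the matched pair factors $A$ and $B$ reassemble into the claimed factorization.
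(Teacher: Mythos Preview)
Your route is far more elaborate than the paper's, and the reverse inclusion is not actually established. The paper argues by a direct computation: once $B\triangleleft A$ is trivial the two products with $a\bowtie 1_B$ are $(a\bowtie 1)(f\bowtie k)=af\bowtie k$ and $(f\bowtie k)(a\bowtie 1)=f(k_1\triangleright a)\bowtie k_2$, and equating these for all $a\in A$ yields $f\in Z(A)$ and $k\triangleright a=\varepsilon(k)a$, the latter being the paper's operative description of $k\in N_H(B)$. No comodule structures, Hopf projections, or normality of $C_H(A)$ are invoked.

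Your forward inclusion is correct but roundabout; the one-line check above already gives it. The reverse inclusion, which you flag as the main obstacle, has genuine gaps. You assert that $C_H(A)=H^A$ is a Hopf subalgebra of $H$; adjoint invariants do form a subalgebra, but closure under $\Delta$ is not automatic and you do not justify it. You claim $\rho$ restricts to $H^A\to H^A\otimes B$, yet the identity $\pi\circ\mathrm{ad}_l(a)=\varepsilon(a)\pi$ controls only the second tensor leg of $\rho(h)$; you still need the first leg to land in $H^A$, which requires a compatibility of $\Delta_H$ with the $A$-adjoint action that you have not supplied. Most seriously, your argument that $C_H(A)\cap B$ is $H$-ad-stable (hence contained in the normal core $N_H(B)$) presupposes that $C_H(A)$ itself is normal in $H$, which is neither given nor proved. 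And even granting all of this, knowing $(H^A)^{coB}=Z(A)$ and $H^A\cap B=N_H(B)$ does not by itself force the factorisation $H^A=Z(A)\otimes N_H(B)$ without an additional cleftness or faithful-flatness argument that you only name.
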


\begin{proof}
Let $f\bowtie k \in C_H(A)$ and $a\bowtie 1_B \in A$. Then $(f\bowtie k)(a\bowtie 1_B) = (a\bowtie 1_B)(f\bowtie k)$.  On one hand we have
\[  (a\bowtie 1_B)(f\bowtie k) = af_1\bowtie (1_B\triangleleft f_2)k =af\bowtie k\]
Since depth two implies $A\triangleleft B$ is trivial. On the other hand 
\[(f\bowtie k)(a\bowtie 1_B) = f(k_1\triangleright a) \bowtie k_2\]
Now 
\[f(k_1\triangleright a) \bowtie k_2 = af \bowtie k\] 
if and only if $k\triangleright a = \varepsilon(k)a$ and $fa = af$ for all $a\in A$ if and only if $k \in N_H(B)$ and $f \in Z(A)$. 
\end{proof}

\begin{co}
\label{centralizergroup}
Let $kG$ be a finite group algebra and consider $H = D(kG)$ its Drinfel\vtick d double. Then
\[C_H((kG)^*) = Z((kG)^*)\bowtie Z(kG)\]
as algebras. 
\end{co}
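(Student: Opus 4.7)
The plan is to specialize Proposition~\ref{centralizer} to $H = D(kG) = (kG)^{*cop}\bowtie kG$, taking $A = (kG)^*$ and $B = kG$. The depth hypothesis $d((kG)^*, H) \leq 2$ required by that proposition is already in hand from the paragraphs preceding Theorem~\ref{mainthm}: the right coadjoint action $g \llhu p_x = \varepsilon(p_x) g$ shows $B \triangleleft A$ is trivial in the matched pair sense, so Theorem~\ref{mainthm} supplies the depth bound.

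Applying Proposition~\ref{centralizer} then gives $C_H((kG)^*) = Z((kG)^*) \bowtie N_H(kG)$. Since $(kG)^*$ is commutative, $Z((kG)^*) = (kG)^*$, and the corollary reduces to the identification $N_H(kG) = Z(kG)$. By the proof of Proposition~\ref{centralizer}, $N_H(kG)$ is the set of $b \in kG$ satisfying $b \rrhu p_x = \varepsilon(b) p_x$ for all $x \in G$, where $g \rrhu p_x = p_{gxg^{-1}}$ is the left coadjoint action on basis elements.

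Writing $b = \sum_g c_g g$, the defining condition becomes a system of scalar equations indexed by $(x, y) \in G \times G$, obtained by comparing coefficients of each $p_y$ in $b \rrhu p_x = \varepsilon(b) p_x$. The plan is to verify on a class-sum basis of $Z(kG)$ that each class sum satisfies the condition, yielding $Z(kG) \subseteq N_H(kG)$, and conversely to extract from the system of equations that the coefficient function $g \mapsto c_g$ of any $b \in N_H(kG)$ must be constant on each conjugacy class, yielding $N_H(kG) \subseteq Z(kG)$. The main obstacle is the reverse inclusion: organizing the coefficient comparisons indexed by different $x$ so that they collapse precisely to the class-function condition defining $Z(kG)$, using the orbit-stabilizer identification of the conjugation fibers $\{g \in G : gxg^{-1} = y\}$ with cosets of centralizer subgroups.
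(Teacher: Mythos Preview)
Your overall strategy --- specialize Proposition~\ref{centralizer} with $A=(kG)^*$ and $B=kG$, using the depth-two input already established before Theorem~\ref{mainthm} --- is exactly the intended route (the paper offers no separate argument for the corollary). The reduction to identifying $N_H(kG)$ is also correct.

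The gap is in the identification $N_H(kG)=Z(kG)$. Your plan is to check that each class sum $b=\sum_{g\in C}g$ satisfies $b\rrhu p_x=\varepsilon(b)\,p_x$ for all $x$, but this is false. Take $G=S_3$ and $b=(12)+(13)+(23)$; then
\[
b\rrhu p_{(12)}=p_{(12)}+p_{(13)}+p_{(23)}\neq 3\,p_{(12)}=\varepsilon(b)\,p_{(12)}.
\]
More generally, for $b=\sum_g c_g g$ the condition $b\rrhu p_x=\varepsilon(b)\,p_x$ for all $x$ forces $\sum_{g:\,gxg^{-1}=y}c_g=0$ whenever $y\neq x$, which is \emph{not} the class-function condition. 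Working through these constraints (your orbit--stabilizer idea is the right tool) one finds instead that $N_H(kG)=kZ(G)$, the group algebra of the center of $G$, not $Z(kG)$, the span of class sums. So the verification you propose for the forward inclusion will not go through, and the reverse inclusion you outline would in fact produce a strictly smaller subalgebra than $Z(kG)$.

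You should also be aware that the reduction via Proposition~\ref{centralizer} is itself delicate: that proposition analyzes only simple tensors $f\bowtie k$, whereas a general element of $C_H(A)$ is a sum of such, and the centralizing condition does not decompose termwise. A direct computation of $C_{D(kG)}((kG)^*)$ gives $\mathrm{span}\{p_y\bowtie g: gy=yg\}$, which for nonabelian $G$ is strictly larger than $(kG)^*\bowtie kZ(G)$. So before completing your argument you will need either to revisit what $N_H(B)$ is meant to denote here, or to argue directly rather than through Proposition~\ref{centralizer}.
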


\end{document}